\newtheorem{theorem}{Theorem}[section]
\newtheorem{lemma}[theorem]{Lemma}
\newtheorem{corollary}[theorem]{Corollary}
\newtheorem{remark}[theorem]{Remark}
\def\to{\rightarrow}
\def\bs{\bigskip}
\def\AA{{\mathcal{A}}}
\def\PP{{\mathcal{P}}}
\def\CC{{\mathcal{C}}}
\def\NN{{\mathcal N}}
\title{Pseudocontinuation and Cyclicity \\ for Random Power Series}
\author{Evgeny~Abakumov}
\address{Laboratoire d'Analyse et de Mathematiques Appliqu\'ees, UMR CNRS 8050
\\Universit\'e de Marne-la-Vall\'ee
\\5, Bd Descartes
\\Champs-sur-Marne
\\77454 Marne-la-Vall\'ee Cedex 2
\\France}
\email{abakumov\@@math.univ-mlv.fr}
\author{Alexei~Poltoratski}
\thanks{The second author is supported by
N.S.F. Grant No. 0500852}
\address{Texas A\&M University
\\ Department of Mathematics\\
College Station, TX 77843, USA }
\email{alexeip\@@math.tamu.edu}
\begin{document}
\maketitle

\begin{abstract}
We prove that a random function in the Hardy space $H^2$ is a non-cyclic vector for the backward
shift operator almost surely. The question of existence of a local pseudocontinuation for
a random analytic function is also studied.
\end{abstract}

\section{Introduction}\label{sec1}

The study of various properties of random analytic functions has become an integral
part of complex analysis during the last century.
A natural way to introduce "randomness" on a space of analytic functions is to
consider
random power series, i. e. power series whose coefficients are independent
complex-valued random variables.
This approach was first utilized by Borel \cite{Bo} at the end of 19th century.
Borel was concerned with the absence of  analytic
continuation for a random function.
Later such studies were continued, among others, by
Steinhaus \cite{St}, Payley and Zygmund \cite{PZ}.
It was shown that random analytic functions with Steinhaus- or
Rademacher-distributed coefficients cannot
be analytically continued through any arc of their circle of convergence almost
surely.
The book by J.-P. Kahane \cite{K}
contains an excellent introduction to this circle of questions as well as further
results and historical references.

The notion of pseudocontinuation (see Section \ref{sec2} for the definition), that
generalizes
the notion of analytic continuation, was introduced by H. Shapiro in \cite{Sh}.
The important role of  pseudocontinuations in operator theory was discovered by
Douglas, Shapiro and Shields in \cite{DSS}. In addition to the operator theory,
pseudocontinuable functions appear in
a number of applications in function theory, in the theory of stationary Gaussian
processes, etc., see \cite{RS, N}
or \cite{R} for a recent review of this area.
One of the goals of this paper is to extend the classical results on the absence of
analytic continuations
for random analytic functions mentioned above to the case of pseudocontinuations.

The main result of \cite{DSS} establishes a relation between pseudocontinuation and cyclicity.
Recall that a vector $v$ is cyclic for a linear operator $L$ on a Banach space if
the closed linear span of
$L^n v, n=1,2,...$ is equal to the
whole space.
We will denote by $B$ the standard operator of backward shift on the Hardy
space $H^2$ (see, for instance, \cite{N, CR}) in
the unit disk $\Bbb D$ defined as
$$Bf(z)=\frac{f(z)-f(0)}z.$$
Since $B$ is the only linear operator considered in this paper, we
will call a vector from $H^2$ cyclic if it is cyclic for $B$. The
backward shift operator is one of the main objects of the functional
model theory. For a thorough discussion of this theory we refer the reader to the well-known
book by Nikolski \cite{N} . It was shown in
\cite{DSS} that a function from $H^2$ is a cyclic vector for $B$ if and only if
it does not admit a pseudocontinuation into the complement of the
unit disk $\hat{\Bbb C}\setminus\bar{\Bbb D}$. This connection between (non-)cyclicity
and pseudocontinuation allows one to establish numerous properties
of the set of cyclic vectors $\CC$.

One of such properties of $\CC$  is that it is a dense
$G_\delta$-subset of $H^2$ with respect to the norm topology. I.e.,
quasi almost all functions in $H^2$, in the sense of Baire, are
cyclic. In view of this fact it is natural to ask  if a "randomly
chosen"  vector from $H^2$ is cyclic almost surely. This question
was
 brought to our attention by
N.~Nikolski and, independently, by D.~Sarason. Here we give a positive answer to
this question in the form of
the following theorem:

\begin{theorem}\label{t1}
Consider the random power series
$$F(z)=\sum_{n=0}^\infty X_nz^n,\ \ \ X_n=X_n(w),$$
where the coefficients $X_n$ are independent random complex variables. Suppose that
$F(z)$ belongs to $H^2$ almost surely.
Then either

(i) $F(z)$ is a cyclic vector almost surely, or

(ii) there exists a (deterministic) function $F_0\in H^2$ such that $F-F_0$ is a polynomial
almost surely.
\end{theorem}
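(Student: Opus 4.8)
The plan is to show first that the event that $F$ is cyclic is a tail event, so that by Kolmogorov's zero--one law its probability is either $0$ or $1$. The key observation is that, by the Douglas--Shapiro--Shields theorem quoted above, cyclicity is equivalent to the \emph{non}-existence of a pseudocontinuation, and this property is invariant under adding a polynomial: any polynomial is meromorphic of bounded type in $\hat\C\setminus\bar\D$ (with at most a pole at $\infty$), so $F$ admits a pseudocontinuation if and only if $F+p$ does, for every polynomial $p$. Hence altering finitely many of the coefficients $X_0,\dots,X_N$ does not affect whether $F$ is cyclic, so $\{F\text{ cyclic}\}$ belongs to the tail $\sigma$-algebra of the independent sequence $(X_n)$. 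If its probability is $1$ we are in case (i); the remainder of the argument treats the complementary case, in which $F$ is non-cyclic almost surely, and aims at case (ii).

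In that case I would translate non-cyclicity into a linear relation on the coefficients. Writing $B^kF=\sum_n X_{n+k}z^n$, non-cyclicity means $\clos\,\mathrm{span}\{B^kF:k\ge0\}\ne H^2$, so there is a nonzero $v=\sum_n v_nz^n\in H^2$ with $\langle B^kF,v\rangle=\sum_{n\ge0}X_{n+k}\bar v_n=0$ for all $k\ge0$; that is, almost surely some nonzero $\ell^2$ sequence annihilates every shift of the coefficient sequence. To exploit independence I would symmetrize: let $F'$ be an independent copy of $F$ and set $Y_n=X_n-X_n'$, so that the $Y_n$ are independent and symmetric and $G=\sum Y_nz^n=F-F'$ is again non-cyclic almost surely (pseudocontinuations add). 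It then suffices to prove that a symmetric random series which is non-cyclic almost surely must have $Y_n=0$ almost surely for all but finitely many $n$. Indeed $Y_n=0$ a.s. forces $X_n$ to agree almost surely with a deterministic constant $a_n$, and then $F_0=\sum a_nz^n$ (with the finitely many remaining coefficients chosen freely) lies in $H^2$ and makes $F-F_0$ a polynomial almost surely, which is case (ii).

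The heart of the matter is to show that if infinitely many $Y_n$ are non-degenerate then $G$ is cyclic with positive probability, contradicting the zero--one law. Normalizing the annihilator so that its first nonzero entry occurs at index $m$ with $v_m=1$, the relation at shift $k=0$ reads $Y_m=-\sum_{n>m}Y_n\bar v_n$, expressing the coefficient $Y_m$ as an $\ell^2$-convergent linear combination of strictly later coefficients. Since $Y_m$ is independent of $\FF_{>m}:=\sigma(Y_{m+1},Y_{m+2},\dots)$, such an identity can hold only if $Y_m$ is almost surely constant, hence, being symmetric, equal to $0$; this contradicts non-degeneracy and would finish the proof.

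The main obstacle is precisely that the annihilating sequence $v$ produced above depends on the sample point, and in particular depends on the very coordinate $Y_m$ being predicted, so the identity is not a relation with a fixed $\FF_{>m}$-measurable predictor and the independence argument cannot be applied verbatim. I would attack this by splitting the non-cyclicity event according to the first index $m$ at which an annihilator is nonzero and noting that some such event has positive probability, and then by trying to reconstruct the annihilator from the equations $\langle B^kG,v\rangle=0$ with $k\ge1$ alone --- which involve only $\FF_{>m}$-measurable data --- so that $(\bar v_n)_{n>m}$ becomes measurable with respect to the future; the conditioning argument above would then force $Y_m=0$ a.s. Making this selection measurable and independent of $Y_m$, using the symmetrization together with the zero--one law applied coordinate by coordinate, is the delicate point on which the whole proof turns.
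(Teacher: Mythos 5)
Your opening reduction is fine (cyclicity is a tail event because adding a polynomial preserves pseudocontinuability, so the zero--one law applies), but the core of the proposal has a fatal flaw, and the auxiliary lemma it rests on is false as stated. For the lemma: take $Y_n$ symmetric with $P(Y_n=\pm 1)=2^{-n-1}$ and $P(Y_n=0)=1-2^{-n}$. By Borel--Cantelli, almost surely only finitely many $Y_n$ are nonzero, so $G$ is a.s.\ a polynomial, hence rational, hence pseudocontinuable and non-cyclic a.s.\ --- yet \emph{no} coefficient $Y_n$ is degenerate. So ``non-cyclic a.s.'' cannot force individual coefficients to be a.s.\ constant; the correct dividing line, which is what the paper uses, is convergence of $\sum_n\epsilon_n$ with $\epsilon_n=1-\max_{\zeta}P(X_n=\zeta)$, and case (ii) is reached through Borel--Cantelli with $a_n$ the modal value of $X_n$, not through degeneracy of each $X_n$ (note $P(Y_n\neq 0)\geq\epsilon_n$, so the statement you would actually need is $\sum_n P(Y_n\neq 0)<\infty$).

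More fundamentally, the conditioning mechanism you propose cannot be repaired, for a reason you yourself establish in your first paragraph: non-cyclicity is a \emph{tail} event, invariant under arbitrary changes of finitely many coefficients, since replacing $Y_m$ by any $c\in\Bbb C$ changes $G$ by the polynomial $(c-Y_m)z^m$. Hence, conditionally on $\sigma(Y_n:n>m)$, the event $\{G\ \text{non-cyclic}\}$ is already determined mod null sets, and the set of values of $Y_m$ compatible with it is always either empty or all of $\Bbb C$; no future-measurable selection of an annihilator can turn $Y_m=-\sum_{n>m}Y_n\bar v_n$ into a genuine constraint (concretely, an annihilator of $\{B^kG\}_{k\geq 1}$ need not annihilate $G$ itself, and when it does not, $Y_m$ is unconstrained). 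You flag this as a delicate measurable-selection issue, but it is a dead end in principle: independence plus linear algebra carries no information about a tail event. The rigidity must come from elsewhere, and in the paper it is arithmetic: one resamples the coefficients only on an $R$-set $\Lambda$ with $\sum_{n\in\Lambda}\epsilon_n=\infty$ (Lemma \ref{l1}), so that the difference of the two copies has spectrum in $\Lambda$; being pseudocontinuable a.s., it is then analytically continuable by Aleksandrov's Theorem \ref{t6}, hence rational, hence a polynomial, contradicting Borel--Cantelli. Your symmetrization with an independent copy is in the right spirit --- it is exactly how the paper proves Theorem \ref{t4} --- but without the gap-series/$R$-set input nothing forces the difference series to degenerate. (A smaller omission: the measurability of the cyclicity event, which you assume, is itself nontrivial and is the content of Lemma \ref{l2}; your route would also need the paper's case analysis on $r_F$, since $F\in H^2$ only gives $r_F\geq 1$.)
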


\begin{remark}
Classical situations of Steinhaus, Gaussian or Rademacher random variables are
covered by (i).
Case (ii) is pathological: it occures when the random variables $X_n$ asymptotically
 resemble  point charges.
In this case $F$ is cyclic a.s. if $F_0\in \CC$ and non cyclic a.s.
if $F_0\in H^2\setminus \CC$.
\end{remark}

Theorem \ref{t1} is proved in Section \ref{sec2}.
The proof is based on non-pseudocontinuability of  lacunary series studied in \cite{ DSS, Ab, Al1, Al2}.
In our argument we make use of the remarkable results by A. Aleksandrov from \cite{Al2}.

Section \ref{sec3} is devoted to local pseudocontinuations. The
results from \cite{St}, \cite{PZ}, and \cite{RN} show that (in most
cases) random power series do not admit local analytic continuations
almost surely. It is natural to ask if these results will remain
true after "continuation" is replaced with "pseudocontinuation".
 A positive answer to this question is given in  Theorems \ref{t3}
and \ref{t4}.



\section{Pseudocontinuation into $\hat{\Bbb C}\setminus\bar{\Bbb D}$ and cyclicity}\label{sec2}

In this paper we are using the standard probabilistic setup described
in \cite{K}, where the reader can find all the necessary definitions and basic
properties.

Let $(\Omega, \AA, \PP)$ be a probability space. We will consider a
sequence of independent random complex variables $X_n\,\,
(X_n=X_n(w), w\in \Omega)$ and study the properties of the random
power series $F(z)=\sum_{n=0}^\infty X_n z^n$. When studying such
properties one should start by verifying that a property is an
event, i.e. that the set $A$ of $w$, such that the function
corresponding to the sequence $\{X_n(w)\}$ satisfies the desired
property, is measurable: $A\in\AA$. We will be mostly interested in
the events $A$ that do not depend on the values of any finite number
of variables $X_n$, i. e. the sets $A\in\AA$ with the property that
if $w\in A$ and $X_n(w)=X_n(w')$ for all but finitely many $n$ then
$w'\in A$. By the zero-one law the probability of any such event is
0 or 1.

First let us list some examples of such events $A$:

1) The radius of convergence of $F$ is equal to $r$, where $r$ is a
fixed positive number or $ +\infty$;

2) $F(z)$ belongs to the space $H^2$;

3) $F(z)$ is a cyclic vector in $H^2$.

In some cases it is clearly true that the property is an event: in
the first example it suffices to see that $r(w)=(\limsup
X_n^{1/n})^{-1}$ is a measurable function of $w$. In other cases
this step requires a considerable effort, like in our 3rd example
(it will follow from Lemma \ref{l2}). After the measurability is
established, it is usually clear if an event like 1)-3) depends on
the values of a finite number of variables. If it does not, one can
apply the zero-one law.

In our first example the zero-one law implies that the random series $F(z)$ has the same
radius of convergence for almost all $w$. Throughout this paper we will denote this
deterministic radius by $r_F$.

Let $f$ be analytic in the disk $\{|z-a|<r\}$ and let $I$ be an arc
of the boundary circle $\{|z-a|=r\}$. We say that $f$ admits a
pseudocontinuation through $I$ into an open (in
$\hat{\Bbb C} $) domain $D\subset\{|z-a|>r\},
\partial D\supset I$, such that $\{|z-a|<r\}\cup I\cup D$ is open,
if there exists a  function $g$ from the Nevanlinna class  $\NN(D)$ such that
the non-tangential limits of $f$ and $g$ coincide almost everywhere
on $I$. The function $f$ is said to be
pseudocontinuable through $I$ if it admits a pseudocontinuation into some domain $D$
as above. Recall that the Nevanlinna
class $\NN(D)$ is defined as a set of ratios of two bounded analytic
functions in $D$.


Now we will formulate the following result  on pseudocontinuation of a random power
series. It will imply the cyclicity result
Theorem \ref{t1}.

\begin{theorem}\label{t2}
Consider a series $F(z)=\sum_{n=0}^\infty X_n z^n$, where $X_n=X_n(w)$ are
independent random complex variables. Suppose that the radius
of convergence of the series $F(z)$ is equal to $r>0$ almost surely. For any $n\geq 0$
denote $\epsilon_n=1-\max_{\zeta\in \Bbb C} P(X_n=\zeta)$.

a) If $\sum_{n=0}^\infty\epsilon_n=\infty$ then $F(z)$ does not
admit a pseudocontinuation through its circle of convergence 
into $\hat{\Bbb C}\setminus\overline{r\Bbb D}$ almost surely.

b) If $\sum_{n=0}^\infty\epsilon_n<\infty$ then there exists a
(deterministic) power series $F_0(z)=\sum_{n=0}^\infty \hat{F_0}(n)
z^n$ such that $F-F_0$ is a polynomial almost surely.
\end{theorem}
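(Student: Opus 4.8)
I would begin with part (b), which is a direct application of the Borel--Cantelli lemmas within the probabilistic setup of \cite{K}. For each $n$ choose $a_n\in\Bbb C$ with $P(X_n=a_n)=\max_{\zeta}P(X_n=\zeta)=1-\epsilon_n$, and set $F_0(z)=\sum_{n\ge0}a_nz^n$. Since $\sum_n P(X_n\ne a_n)=\sum_n\epsilon_n<\infty$, the first Borel--Cantelli lemma shows that almost surely $X_n=a_n$ for all but finitely many $n$; hence $F-F_0=\sum_n(X_n-a_n)z^n$ is a polynomial almost surely. Because $F$ has radius of convergence $r$ a.s.\ and differs from $F_0$ by a polynomial, $F_0$ is a genuine power series of radius $r$, so $\hat{F_0}(n)=a_n$ gives the desired deterministic series.

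For part (a) the plan is to argue by contradiction via symmetrization and randomization. First I would normalize $r=1$: replacing $X_n$ by $r^nX_n$ sends the circle of convergence to $\T$, carries pseudocontinuation into $\hat{\Bbb C}\setminus\overline{r\Bbb D}$ to pseudocontinuation into $\hat{\Bbb C}\setminus\overline{\Bbb D}$ (an affine change of variable preserving the class $\NN$), and leaves each $\epsilon_n$ unchanged, since the concentration function is invariant under multiplication by a nonzero constant. The event $A$ that $F$ pseudocontinues through $\T$ into $\hat{\Bbb C}\setminus\overline{\Bbb D}$ is measurable (Lemma~\ref{l2}) and, since adding a polynomial does not affect pseudocontinuability, it does not depend on finitely many of the $X_n$; by the zero-one law $P(A)\in\{0,1\}$. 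Suppose, toward a contradiction, that $P(A)=1$.

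Let $(X_n')$ be an independent copy of $(X_n)$, with associated series $F'$, which is then also pseudocontinuable a.s.\ into $\hat{\Bbb C}\setminus\overline{\Bbb D}$. Because the Nevanlinna class $\NN(\hat{\Bbb C}\setminus\overline{\Bbb D})$ is closed under differences, the symmetrized series $G=F-F'=\sum_n Y_nz^n$ with $Y_n=X_n-X_n'$ is pseudocontinuable a.s.\ (its pseudocontinuation being the difference of those of $F$ and $F'$ on the common domain). The $Y_n$ are independent and symmetric, and a direct computation gives $P(Y_n=0)=\sum_\zeta P(X_n=\zeta)^2\le\max_\zeta P(X_n=\zeta)=1-\epsilon_n$, so that $P(Y_n\ne0)\ge\epsilon_n$. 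Since $\sum_n\epsilon_n=\infty$ and the $Y_n$ are independent, the second Borel--Cantelli lemma yields $Y_n\ne0$ for infinitely many $n$, almost surely. Finally I would randomize: by symmetry $(Y_n)$ and $(\eta_nY_n)$ have the same distribution for an independent Rademacher sequence $(\eta_n)$, so $G\overset{d}{=}\sum_n\eta_nY_nz^n$. Conditioning on $(Y_n)$ reduces the claim to a statement about a fixed coefficient sequence $(c_n)$ with infinitely many nonzero entries.

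The remaining assertion is that such a Rademacher series $\sum_n\eta_nc_nz^n$ admits no pseudocontinuation into $\hat{\Bbb C}\setminus\overline{\Bbb D}$ almost surely, and this is the step I expect to be the main obstacle. The plan is to extract a Hadamard-lacunary subsequence $(n_k)$ with $c_{n_k}\ne0$, to invoke Aleksandrov's results from \cite{Al2} to pass from the pseudocontinuability of the full series to that of its lacunary subseries $\sum_k\eta_{n_k}c_{n_k}z^{n_k}$, and then to exclude the latter by the non-pseudocontinuability of random lacunary series established in \cite{DSS,Ab,Al1}. Integrating the resulting almost-sure non-pseudocontinuability back over the distribution of $(Y_n)$ contradicts $P(A)=1$, forcing $P(A)=0$ and proving (a). The genuinely delicate points are the radius-of-convergence bookkeeping for $G$ and its lacunary subseries: the lacunary exclusion requires circle of convergence $\T$, so one must guarantee that the ``spread-out'' coordinates produced by $\sum_n\epsilon_n=\infty$ coexist with the ``not-too-small'' coordinates produced by the radius being $1$, i.e.\ that a lacunary subsequence of nonzero $c_{n_k}$ with $\limsup|c_{n_k}|^{1/n_k}=1$ can be selected; a super-geometric decay of $G$ would make it entire and hence harmlessly pseudocontinuable. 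Verifying this compatibility, together with the precise form of Aleksandrov's transfer from a series to its lacunary subseries, is where the real work lies.
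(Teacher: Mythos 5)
Your part (b) is correct and coincides with the paper's argument (Borel--Cantelli applied to $P(X_n\neq a_n)=\epsilon_n$), and the opening moves of your part (a) --- normalization $r=1$, measurability via Lemma~\ref{l2}, the zero-one law, and symmetrization against an independent copy to get $Y_n=X_n-X_n'$ with $P(Y_n\neq 0)\geq\epsilon_n$ --- are all sound (the paper symmetrizes via measure-preserving automorphisms $T_n$ instead, and itself uses your independent-copy device in the proof of Theorem~\ref{t4}). The genuine gap is the final step, which your whole plan hinges on: there is no ``transfer of pseudocontinuability from a series to its lacunary subseries.'' Aleksandrov's results (Theorem~\ref{t6} here) apply only to functions whose \emph{entire} spectrum lies in the thin set: if $f=\sum_{n\in\Lambda}a_nz^n$ with $\Lambda$ an $R$-set is pseudocontinuable, then $f$ continues analytically. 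They say nothing about the spectral projection of a general pseudocontinuable function onto a lacunary set, and such a transfer is in fact false. Take the singular inner function $S(z)=\exp\left(\frac{z+1}{z-1}\right)$: it is pseudocontinuable (every inner function is, via $1/\overline{S(1/\bar z)}$), and since $z=1$ is a singular point its Taylor series has radius of convergence exactly $1$, so $\limsup|\hat S(n)|^{1/n}=1$ and one can select a Hadamard-lacunary sequence $(n_k)$ with $|\hat S(n_k)|^{1/n_k}\to 1$. The subseries $\sum_k\hat S(n_k)z^{n_k}$ lies in $H^2$, has radius $1$, and by Douglas--Shapiro--Shields \cite{DSS} is \emph{not} pseudocontinuable. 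So the step you flag as ``where the real work lies'' cannot be carried out, with or without the Rademacher randomization.

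What the paper does instead --- and this is the idea your proposal is missing --- is to make the \emph{difference itself} have spectrum in a thin set, rather than projecting onto one. Lemma~\ref{l1} (an arithmetic diagonal construction) produces an $R$-set $\Lambda$ with $\sum_{n\in\Lambda}\epsilon_n=\infty$; one then resamples the coefficients \emph{only on} $\Lambda$ (automorphisms $T_n$, equal to the identity off $\Lambda$, with $P(X_n(w)\neq X_n(T_nw))\geq 2\epsilon_n$ on $\Lambda$). The difference $F-\tilde F$ is then a.s.\ pseudocontinuable, being a difference of two a.s.\ pseudocontinuable functions, \emph{and} is supported on $\Lambda$ by construction, so Theorem~\ref{t6} applies to it directly: it continues analytically across $\Bbb T$; a function with a Nevanlinna pseudocontinuation into all of $\hat{\Bbb C}\setminus\bar{\Bbb D}$ and an analytic continuation across $\Bbb T$ is rational, and a rational function with spectrum in an $R$-set is a polynomial (see \cite{Al2}). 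The second Borel--Cantelli lemma along $\Lambda$ (independent events, divergent sum) makes infinitely many coefficients of $F-\tilde F$ nonzero a.s.\ --- contradiction. In your scheme the $R$-set must therefore be chosen \emph{in advance} from the $\epsilon_n$ via Lemma~\ref{l1}, not extracted afterwards from a realization: ``$Y_n\neq0$ infinitely often'' does not localize the divergence to any $R$-set without that lemma. Note also that your worry about radius-of-convergence bookkeeping is misplaced for this theorem: since the pseudocontinuation lives on the whole exterior domain, rationality does all the work and no lower bound on $|c_n|$ along $\Lambda$ is needed beyond nonvanishing; the bookkeeping you describe (the sequence $\delta_n\downarrow0$ and the Borel--Cantelli estimate on $|X_n|^{1/n}$) is exactly what the paper deploys for the \emph{local} statement, Theorem~\ref{t3}, where only an arc is available and the contradiction must come from analytic continuation to a strictly larger disk.
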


An important role in the proof of Theorem \ref{t2} is played by a result on $R$-sets
from \cite{Al2}.
Following \cite{Al2} we say that a subset $\Lambda$ of $\Bbb Z_+$ is an $R$-set if
any analytic in the unit disk
function $f(z)=\sum_{n\in \Lambda}a_n z^n$,
that has a pseudocontinuation across $\Bbb T$, 
also admits an analytic continuation across $\Bbb T$.

\begin{theorem}\label{t6}\cite{Al2}
Let $\Lambda\subset\Bbb Z_+$. Suppose that for all sufficiently
large positive integers $s$  there exists a positive integer $m$
such that
$$\Lambda\cap\{-s+m\Bbb Z\}=\emptyset.$$ Then $\Lambda$ is an $R$-set.
\end{theorem}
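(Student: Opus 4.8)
The plan is to read the hypothesis as a family of spectral gaps, convert each gap into an exact symmetry of $f$, and then argue that such symmetries are incompatible with a pseudocontinuation that is not an honest analytic continuation. Throughout I write $\phi$ for the common non-tangential boundary function of $f$ and of its pseudocontinuation $g\in\NN(\hat{\Bbb C}\setminus\overline{\Bbb D})$, so that the Fourier spectrum of $\phi$ sits inside $\Lambda\subset\Bbb Z_+$.

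First I would unwind the hypothesis. Fix a large $s$ and the corresponding $m$, and set $\omega=e^{2\pi i/m}$. The condition $\Lambda\cap\{-s+m\Bbb Z\}=\emptyset$ says exactly that none of the Taylor exponents $n+s$ of $z^s f(z)$ (with $n\in\Lambda$) is divisible by $m$, i.e. the averaging over the $m$-th roots of unity annihilates $z^s f(z)$. After dividing out $z^s$ this reads
$$\sum_{j=0}^{m-1}\omega^{js}f(\omega^j z)=0,\qquad |z|<1.$$
The same linear relation holds a.e.\ on $\Bbb T$ for the boundary values $\phi$, hence $\sum_{j}\omega^{js}g(\omega^j z)$ is a function of bounded type in $\{|z|>1\}$ with vanishing non-tangential limits a.e.; such a function is identically zero, so the identity propagates to $g$ in the exterior as well. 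Thus for every large $s$ the whole function ($f$ inside $\Bbb D$, $g$ outside) is killed by the root-of-unity average attached to $m=m(s)$.

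Next I would try to exploit these symmetries through the residue decomposition attached to a single modulus $m$: writing $f=\sum_{r=0}^{m-1}f_r$ with $f_r(z)=z^r h_r(z^m)$ collecting the exponents $\equiv r\pmod m$, each $f_r$ is again of bounded type and pseudocontinuable (its pseudocontinuation being the corresponding average of $g$), while the gap forces $f_r\equiv0$ for $r\equiv-s$. Passing to $w=z^m$ turns each $h_r$ into a pseudocontinuable function whose spectrum $\{(n-r)/m:\ n\in\Lambda,\ n\equiv r\}$ is a thinned copy of $\Lambda$. This is the elementary scaffolding that isolates sparser pieces and feeds them into the rigidity step below.

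The hard part, and the real content of the theorem, is that last upgrade: turning the pseudocontinuation into a genuine analytic continuation across $\Bbb T$, which is where the hypothesis must be used for \emph{all} large $s$ rather than for one. The natural route is by contradiction. If $f$ did not continue analytically across $\Bbb T$, then by the Douglas--Shapiro--Shields correspondence $\phi$ lies in a model space $K_\Theta$ with $\Theta$ inner and not analytically continuable across some $\zeta_0\in\Bbb T$, so $\Theta$ carries a true boundary singularity at $\zeta_0$ (a singular mass, or Blaschke zeros accumulating at $\zeta_0$). I expect the main obstacle to be quantifying how such a singularity forces the spectrum of $\phi$ to meet infinitely many of the progressions $-s+m\Bbb Z$, contradicting the hypothesis that for all large $s$ some such progression is avoided. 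Making this incompatibility precise, i.e.\ controlling the Taylor spectrum of elements of $K_\Theta$ near a boundary singularity of $\Theta$, is the crux, and it is precisely here that the finer machinery of \cite{Al2} enters; the root-of-unity identities and the residue reduction are only the preparatory steps feeding into it.
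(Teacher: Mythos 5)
You should first note that the paper itself offers no proof of this statement: Theorem \ref{t6} is imported verbatim from Aleksandrov \cite{Al2} and used as a black box (the authors' own work begins with Lemma \ref{l1}, which exploits the theorem). So the only question is whether your argument stands on its own, and it does not. Your preparatory steps are correct as far as they go: the gap condition $\Lambda\cap\{-s+m\Bbb Z\}=\emptyset$ is indeed equivalent to the root-of-unity identity $\sum_{j=0}^{m-1}\omega^{js}f(\omega^jz)=0$; this identity does propagate to the exterior function $g$, since a function of bounded type with vanishing non-tangential limits a.e.\ on $\Bbb T$ vanishes identically (Lusin--Privalov); and the residue decomposition modulo $m$ is legitimate. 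But all of this merely re-encodes the hypothesis in analytic form --- nothing is yet derived from it.

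The genuine gap is your final paragraph, which is the entire content of the theorem. You reduce to showing that a boundary singularity of an inner function $\Theta$ with $f\in K_\Theta$ is incompatible with the spectral gaps holding for \emph{all} large $s$, and then you state that making this precise ``is the crux'' and that ``it is precisely here that the finer machinery of \cite{Al2} enters.'' That is, at the decisive step you cite the very paper whose theorem you set out to prove; no mechanism is offered by which a singularity of $\Theta$ at $\zeta_0\in\Bbb T$ forces the Taylor spectrum of $f$ to meet the avoided progressions, and it is not at all clear that your root-of-unity identities feed into such a mechanism (Aleksandrov's actual argument is arithmetic in a different way). There is also a secondary mismatch of framework: the Douglas--Shapiro--Shields reduction to a model space $K_\Theta$ presupposes $f\in H^2$ (or at least that $\phi$ has an $H^2$-type structure), whereas the definition of an $R$-set in this paper concerns an \emph{arbitrary} function analytic in $\Bbb D$ with spectrum in $\Lambda$ admitting a pseudocontinuation in $\NN(\hat{\Bbb C}\setminus\bar{\Bbb D})$; so even the setup of your contradiction step would need repair before the real work could begin.
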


A trivial example of an $R$-set is a finite set. Among the non-trivial examples, let
us mention the set
of all squares $\{n^2\}_{n\in\Bbb Z}$ and the set of all prime numbers, see
\cite{Al2}.

To prove Theorem \ref{t2} we will need the following two lemmas. The first one
concerns  arithmetic properties of $R$-sets, and the second is the statement on the
measurability of the considered sets
of power series.

\begin{lemma}\label{l1}
Let $\sum_{n=0}^\infty\epsilon_n$ be a divergent series of
non-negative real numbers. Then there exists an $R$-set
$\Lambda\subset\Bbb Z_+$ such that $\sum_{n\in \Lambda}\epsilon_n$
diverges.
\end{lemma}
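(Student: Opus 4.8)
The plan is to apply Aleksandrov's criterion (Theorem \ref{t6}) to a set $\Lambda$ built to capture a divergent part of the mass $\sum\epsilon_n$. First I would record a convenient reformulation of the criterion: since $\{-s+m\mathbb{Z}\}$ is the full residue class $-s\pmod m$, the hypothesis of Theorem \ref{t6} says precisely that for every sufficiently large $s$ there is a modulus $m$ such that $\Lambda$ contains no integer congruent to $-s$ modulo $m$. Next, because $\sum_n\epsilon_n=\infty$, I would group $\mathbb{Z}_+$ into consecutive finite blocks $J_1,J_2,\dots$ with $\sum_{n\in J_k}\epsilon_n\ge 1$, so that the problem becomes: select $\Lambda_k\subseteq J_k$ retaining a definite portion of each block's mass (enough that $\sum_k\sum_{n\in\Lambda_k}\epsilon_n=\infty$) while keeping the residues occupied by $\Lambda=\bigcup_k\Lambda_k$ sparse enough for the reformulated criterion.

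The device I would use to produce avoidable sets is a ``top digit'' observation in a fixed base $b\ge2$. If $0<s\le b^{t-1}$ then $-s\equiv b^t-s\pmod{b^t}$, and the base-$b$ expansion of $b^t-s$ has digit $b-1$ in position $t-1$; hence any integer congruent to $-s$ modulo $b^t$ also has digit $b-1$ in position $t-1$. Therefore, if $\Lambda$ omits every integer whose base-$b$ digit in position $t-1$ equals $b-1$ for the scales $t$ in some infinite set, then for each large $s$ one can choose such a $t$ with $b^{t-1}\ge s$ and conclude via the reformulation that $\Lambda$ misses the class $-s\pmod{b^t}$. This shows that sets of the form $\Lambda=\{n:\ \text{the base-}b\ \text{digit of }n\ \text{at each position }p\in P\ \text{differs from }b-1\}$, with $P\subseteq\mathbb{Z}_{\ge0}$ infinite, are $R$-sets. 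To keep mass I would take $P$ very sparse and chosen adaptively: having fixed $p_1<\dots<p_{i-1}$, the initial segment $[0,(b-1)b^{p_{i-1}})$ is already immune to all the digit restrictions (future positions only affect larger integers), so each new $p_i$ can be pushed out far enough that the newly admitted survivors carry an additional unit of mass, at least when the mass is not too concentrated, yielding $\sum_{n\in\Lambda}\epsilon_n=\infty$.

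The step I expect to be the main obstacle is exactly this compatibility between mass retention and residue avoidance, and it is genuinely where the argument must be refined. The reformulation forces one to avoid a \emph{full} residue class simultaneously across all blocks, so no union of long intervals (or of residue classes with pairwise coprime moduli) can work; avoidance essentially requires the occupied residues to be thin at some scale, which is easy for arithmetically structured mass but awkward when the mass is diffuse. Worse, the clean digit construction above is defeated by mass concentrated on the integers $b^k-1$ (all digits equal to $b-1$): a single restriction kills all but finitely many of them. I would address this by choosing, at each sparse scale, the \emph{lightest} admissible digit to forbid while retaining enough scales forbidding $b-1$ to preserve the covering of all large $s$, and by treating the residual concentrated case separately through the fact that such hostile mass lives on a lacunary, multiplicatively structured set (for instance $\{b^k-1\}$), which is itself an $R$-set by Theorem \ref{t6}. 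Verifying that these two regimes can always be combined into a single $R$-set carrying infinite mass is the crux of the proof.
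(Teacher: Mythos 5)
There is a genuine gap, and you flag it yourself: the compatibility of mass retention with residue avoidance is never established, and it cannot be established within your digit framework as stated. Your top-digit observation is correct (forbidding digit $b-1$ at position $t-1$ empties the class $-s\pmod{b^t}$ for $0<s\le b^{t-1}$), but to cover \emph{all} large $s$ you are forced to keep infinitely many scales whose forbidden digit is $b-1$: forbidding a digit $d\ne b-1$ at scale $t$ only empties the classes with $s\in\bigl((b-d-1)b^{t-1},(b-d)b^{t-1}\bigr]$, and the digit $0$ can never be forbidden at infinitely many positions, since every fixed integer has digit $0$ in all sufficiently high positions, so the survivor set would be finite (in base $2$ this means $b-1=1$ is the \emph{only} digit available infinitely often). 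Yet a single scale forbidding $b-1$ already annihilates all but finitely many of the hostile integers $b^k-1$, so the ``lightest admissible digit'' repair conflicts directly with the covering requirement. The fallback is also unsupported on two counts: the dichotomy ``diffuse versus concentrated on $\{b^k-1\}$'' is not exhaustive (hostile mass only needs to favor the top digit at the relevant scales, in every base you might try, and need not sit on any fixed lacunary set), and neither the $R$-set property nor the hypothesis of Theorem \ref{t6} is closed under unions, so ``combining the two regimes into a single $R$-set'' — the step you call the crux — is exactly the missing proof.

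For comparison, the paper's proof needs no base-$b$ structure and resolves the concentration problem with a short pigeonhole argument using \emph{arbitrary} moduli. Call $s$ exceptional if $\sum_{n\not\in\{-s+m\Bbb Z\}}\epsilon_n<\infty$ for every $m\ge1$. At most one exceptional $s_0$ can exist: if $s_1\ne s_2$ were both exceptional, take $m>\max(s_1,s_2)$; the complements of the two residue classes $-s_i\pmod m$ cover $\Bbb Z_+$, so at least one complement carries infinite mass. Hence for every $s>s_0$ there is a modulus $m(s)$ leaving infinite mass off the class $-s\pmod{m(s)}$, and a greedy diagonal construction finishes: at step $k$ remove the class $\{-s_0-k+m_k\Bbb Z_+\}$, choosing $m_k$ so large that the mass retained on $[0,-s_0-k+m_k]$ already exceeds $k-1$; the intersection $\Lambda$ of the resulting sets satisfies the hypothesis of Theorem \ref{t6} and carries divergent mass. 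The key idea your proposal lacks is precisely this uniqueness-of-the-exceptional-$s$ observation, which lets the modulus adapt to the mass distribution separately for each $s$ and makes the construction work for arbitrary $\{\epsilon_n\}$, with no case split at all.
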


\begin{proof}

We start by proving that if for two positive integers $s_1$ and $s_2$

$$\sum_{n\not\in\{-s_i+m\Bbb Z\}}\epsilon_n<\infty$$
for all $m\geq 1$, then $s_1=s_2$.

Indeed, assume that $s_1\neq s_2$ and choose $m>\max(s_1,s_2)$. Then
the complements (in $\Bbb Z_+$) of the sets $\{-s_i+m\Bbb Z\}, \,
i=1,2$, \,cover $\Bbb Z_+$ and hence
$$\sum_{n\not\in\{-s_i+m\Bbb Z\}}\epsilon_n=\infty$$
either for $i=1$ or for $i=2$.

Therefore there is at most one $s_0>0$ such that for any $m\geq 1$
$$\sum_{n\not\in\{-s_0+m\Bbb Z\}}\epsilon_n<\infty.$$
If such an $s_0$ does not exist, put $s_0=0$.
It follows that for any
$s>s_0$ there is $m>0$ such that
$$\sum_{n\not\in\{-s+m\Bbb Z\}}\epsilon_n=\infty.$$

Now we construct $\Lambda$ using a variation of the diagonal process.

Choose $m_1>s_0+1$ such that
$$\sum_{n\not\in\{-s_0-1+m_1\Bbb Z_+\}}\epsilon_n=\infty\ \text{ and set }
\Lambda_1={\Bbb Z}_+ {\setminus} \{- s_0 - 1 + m_1 \Bbb Z \}.
$$
Then choose $m_2>s_0+2$ sufficiently large so that
$$\sum_{n\not\in\{-s_0-2+m_2\Bbb Z\}}\epsilon_n=\infty
\ \ \text{ and }
\sum_{n\in\Lambda_1\cap [0,-s_0-2+m_2]}\epsilon_n>1.$$
Set $\Lambda_2=\Lambda_1\setminus \{-s_0-2+m_2\Bbb Z_+\}$.

On the $k$-th step choose $m_k>s_0+k$ sufficiently large so that
$$\sum_{n\not\in\{-s_0-k+m_k\Bbb Z\}}\epsilon_n=\infty\ \ \text{ and }
\sum_{n\in\Lambda_{k-1}\cap [0,-s_0-k+m_k]}\epsilon_n>k-1.$$
Set $\Lambda_k=\Lambda_{k-1}\setminus \{-s_0-k+m_k\Bbb Z_+\}$.

Now put $\Lambda=\cap_{k=1}^\infty \Lambda_k$. By our construction
$\sum_{n\in \Lambda} \epsilon_n =\infty$ and $\Lambda$ satisfies
the hypothesis of Theorem \ref{t6}. Thus $\Lambda$ is an $R$-set.
\end{proof}

\begin{lemma}\label{l2} Let $F$ be as in the statement of Theorem \ref{t2} and let
$I$ be an arc
of the circle of convergence of $F$.
The set of all $w$  corresponding to funcitons $F$  that admit a pseudocontinuation
through $I$ is an event.
\end{lemma}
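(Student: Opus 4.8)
The plan is to realize $A$ as the projection onto $\Omega$ of a measurable subset of a product $\Omega\times\mathcal K$, where $\mathcal K$ is a compact metric (Polish) space of candidate pseudocontinuations, and then to invoke the measurable projection theorem: if the probability space is complete (which we may assume, as it does not affect the zero--one law) and $\mathcal K$ is Polish, the $\Omega$-projection of a set in $\AA\otimes\mathcal B(\mathcal K)$ is an analytic set and hence lies in $\AA$. After the rescaling $z\mapsto z/r$ (replacing $X_n$ by $X_nr^n$) we may assume $r=1$, so that $I\subset\T$.

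First I would reduce the a priori uncountable choice of target domain $D$ to a fixed countable family. Write $I$ as an increasing union of closed subarcs $I_k\Subset I$, and set $D_{k,m}=\{z:\,1<|z|<1+1/m,\ z/|z|\in I_k\}$. If $F$ admits a pseudocontinuation through $I$ into some admissible $D$, then, since $\partial D\supset I$ and $\{|z|<1\}\cup I\cup D$ is open, $D$ contains a one-sided neighbourhood of every interior point of $I$; by compactness of $I_k$ this neighbourhood is uniform over $I_k$, so $D\supset D_{k,m}$ for $m$ large. Restricting the pseudocontinuation to $D_{k,m}$ (a function of bounded characteristic restricted to a subdomain is again in $\NN$) shows that $F$ pseudocontinues into one of the \emph{fixed} regions $D_{k,m}$ for every $k$. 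Conversely, if $F$ pseudocontinues into some $D_{k,m}$ for each $k$, the Luzin--Privalov boundary uniqueness theorem forces the pseudocontinuations over overlapping subarcs to agree, so they glue to a single analytic $g$ on a region over the open arc $I$ matching $F$ almost everywhere on $I$. Thus
$$A=\Big(\bigcap_{k}\ \bigcup_{m}\ A_{k,m}\Big)\cap A_\infty,\qquad A_{k,m}=\{w:F\ \text{pseudocontinues into}\ D_{k,m}\},$$
where $A_\infty$ records the (delicate) condition that the glued function is of bounded characteristic on some admissible domain attached to all of $I$. It therefore suffices to prove that each $A_{k,m}$, and the extra condition $A_\infty$, are measurable.

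For a fixed region $D=D_{k,m}$ I would parametrise the candidates. Every $g\in\NN(D)$ is a ratio $g=g_1/g_2$ with $g_1,g_2$ in the closed unit ball $\BB=\BB(D)$ of $H^\infty(D)$; with the topology of uniform convergence on compact subsets, $\BB$ is compact and metrizable by Montel's theorem, and interior evaluation is continuous. The non-tangential (equivalently radial) limits $F^*(\zeta,w)$ exist a.e.\ on the relevant set of $w$ and are measurable in $(\zeta,w)$, being limits over rational $\rho\uparrow1$ of the measurable functions $w\mapsto\sum X_n(w)\rho^n\zeta^n$; likewise $g^*(\zeta)=\lim_{t\downarrow0}(g_1/g_2)((1+t)\zeta)$ is measurable in $(\zeta,g_1,g_2)$. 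The requirement ``$F^*=g^*$ a.e.\ on $I$'' is then encoded by the single scalar condition
$$\Phi(w,g_1,g_2):=\int_I \min\bigl(|F^*(\zeta,w)-g^*(\zeta)|,\,1\bigr)\,dm(\zeta)=0,$$
and $\Phi$ is jointly measurable by Fubini. Hence $\{(w,g_1,g_2):\Phi=0\}$ lies in $\AA\otimes\mathcal B(\BB\times\BB)$, and $A_{k,m}$ is its $\Omega$-projection, an analytic set, therefore a member of $\AA$. The same parametrisation (now over a region $\tilde D$ attached to the full arc $I$) expresses $A_\infty$ as a projection and handles it identically.

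The main obstacle is precisely the endpoint analysis hidden in $A_\infty$: passing from pseudocontinuation through every closed subarc $I_k$ to pseudocontinuation through the full open arc $I$ requires controlling the Nevanlinna characteristic of the glued function near the endpoints of $I$, where an admissible domain may taper arbitrarily fast and where no fixed countable family of regions is cofinal. I expect to resolve this by a normal-families argument combined with the a priori integrability $\int_I\log^+|F^*|\,dm<\infty$, which is necessary for any pseudocontinuation to exist: thinning $\tilde D$ so that it hugs $I$ keeps the far boundary contribution to the characteristic comparable to $\int_I\log^+|F^*|\,dm$, so the glued function is of bounded characteristic on a sufficiently thin admissible $\tilde D$. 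Everything else is routine verification, and once this step is in place $A$ is displayed as a countable Boolean combination of analytic sets, hence $A\in\AA$, so pseudocontinuability through $I$ is an event.
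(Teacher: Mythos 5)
Your measurability mechanism for a \emph{fixed} target domain $D$ is sound, and it is genuinely different from the paper's. You parametrize candidate continuations by pairs $(g_1,g_2)$ in the ball of $H^\infty(D)$, which is compact and metrizable in the topology of local uniform convergence, encode the boundary matching by a jointly measurable functional $\Phi$, and project, invoking the fact that the projection of a product-measurable set is analytic and hence measurable once $(\Omega,\AA,\PP)$ is completed. The paper instead stratifies $\NN(D)$ into the sets $\NN_n$ on which the representing measure and the two Blaschke sums are bounded by $n$, chooses a countable subset $C_n$ dense in the topology of convergence in measure on $I$, writes the set of functions pseudocontinuable into $D$ as $\cup_n\cap_m\cup_{p\in C_n}S_p^m$, and proves the nontrivial inclusion by a normal-families and weak-$*$ compactness argument. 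The paper's route buys an explicit countable Boolean combination of elementary events, with no descriptive set theory and no completeness hypothesis; yours is shorter but needs the completion and the (standard, but worth stating) transfer of the zero--one law to it. A minor point on both sides: your $\Phi$ presupposes that $F$ has boundary values a.e.\ on $I$, so the measurable condition that the radial limits of $F$ exist a.e.\ on $I$ should be built into the event; the paper's definition of $S_p^m$ glosses the same issue.

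The genuine gap is your $A_\infty$. As you define it, $A_\infty$ is exactly the original set $A$ --- existence of a Nevanlinna continuation on \emph{some} admissible domain attached to all of $I$ --- so the decomposition $A=(\cap_k\cup_m A_{k,m})\cap A_\infty$ is circular, and your proposal to handle $A_\infty$ ``identically'' by parametrizing over a region $\tilde D$ attached to the full arc contradicts your own correct observation that no fixed countable family of such regions is cofinal: admissible domains may taper arbitrarily fast at the endpoints of $I$. The analytic fix you sketch is moreover incorrect as stated: the finiteness of $\int_I\log^+|F^*|\,dm$ does not make ``the far boundary contribution to the characteristic'' of the glued function comparable to that integral. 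The far boundary of a thinned domain consists of interior points of $\cup_k D_k$, where the glued $g$ is merely analytic; its size there is governed by the harmonic majorants of $\log^+|g|$ on the individual lenses $D_k$, whose bounds blow up with $k$, so along the far boundary near the endpoints $\log^+|g|$ can grow at a rate not controlled by any boundary integral over $I$ (boundary $\log^+$-integrability is necessary but far from sufficient for a harmonic majorant on a domain chosen in advance). To salvage the step one must choose the taper adaptively, depending on $g$, and actually construct a harmonic majorant, or a bounded analytic denominator with prescribed decay across the necks --- a real argument your sketch does not supply. The paper avoids this trap structurally: it reduces at the outset to pseudocontinuation into a fixed domain $D$ adjacent to $I$ and invests its effort in the fixed-domain measurability, never relying on the comparability claim on which your $A_\infty$ step rests.
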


\begin{proof}
It is enough to show that the set of $w$ defined by $F$ that admit a
pseudocontinuation through  $I$ into a fixed domain $D$, adjacent to
$I$, is an event.

Consider the Nevanlinna class $\NN (D)$. Without loss of generality one can assume that $D$ is a simply
connected domain with smooth boundary. Let $\phi$ be a conformal map from $D$ to $\Bbb D$.
Any function $f$ from $\NN (D)$ can be represented as $g\circ\phi$,
where $g$ is from the Nevanlinna class in $\Bbb D$. The function $g$ can be represented as
\begin{equation} g=\exp\left(\int_{\Bbb T}\frac{1+\bar\xi z}{1-\bar\xi z}d\mu(\xi)\right)\frac {B_1}{B_2}
\label{e1}\end{equation}
where $\mu$ is a finite real measure on $\Bbb T$ and $B_1,B_2$ are Blaschke products.
Let us denote by $S_1$ and $S_2$ the Blacshke sums for $B_1$ and $B_2$ correspondingly. We will
denote by
$\NN_n$ the subset of $\NN (D)$ consisting of functions $f$, for which the corresponding $\mu, B_1$ and $B_2$
satisfy $||\mu||+S_1+S_2\leq n$.

Denote by $L^0(I)$ the set of all Lebesgue-measurable functions on $I$ with the
topology of convergence in measure.
Denote by $L_n$ the subset of $L^0(I)$ consisting of the boundary values of
functions from $\NN_n$.
Let $C_n$ be a countable subset of $\NN_n$ that is dense in $L_n$ with respect
to the topology of $L^0(I)$. For $p\in C_n, m\in \Bbb N$, denote by $S_p^m$ the set
of all power series $F$ in $\Bbb D$
that satisfy
$$|\{|F-p|>1/m\}\cap I|<1/m.$$
The set of $w$ corresponding to  $S_p^m$ is a measurable set (an event). The set of
all functions $F$ admitting a pseudocontinuation from $\NN_n$
is the set
\begin{equation}\cap_m\cup_{p\in C_n}S_p^m,\label{e2}\end{equation}
which is, therefore, also an event. 

Indeed, it is easy to see that
the set in \eqref{e2} contains all functions $F$ admitting a
pseudocontinuation from $\NN_n$. To prove the other inclusion,
notice that functions $F$ contained in this set have the property
that in every neighborhood of $F$, with respect to the topology of
$L^0(I)$, there is a function $f\in \NN_n$. Hence one can choose a
sequence $f_k$ of such functions converging to $F$ a.e. on $I$.
Since all $f_k$ belong to $\NN_n$, from this sequence one can choose
a subsequence $f_{n_k}$ such that the measures $\mu_{n_k}$
from \eqref{e1} converge $*$-weakly on $\Bbb T$ and the  Blaschke products $B_1^{n_k}$ and $B_2^{n_k}$ converge normally in $\Bbb D$.

In other words $f_{n_k}=\varphi_k/\psi_k$, where $\varphi_k$ and $\psi_k$
are bounded (uniformly in $k$) analytic functions, converging normally
to $\varphi$ and $\psi$ correspondingly ($\psi\neq 0$ a.e. on $I$ because $f_{n_k}\in \NN_n$).
Note that normal convergence in $\Bbb D$ implies weak convergence
of the boundary values of $\varphi_k$ and $\psi_k$ in $L^2(I)$
to the boundary values of $\varphi$ and $\psi$ correspondingly.
One can show that together with the fact that $\varphi_k/\psi_k$
tends to $F$ a.e. on $I$, this implies that $F=\varphi/\psi$
a.e. on $I$.

Finally, the set of all $F$ admitting a pseudocontinuation through $I$ is an event
because it is equal to
$$\cup_n\cap_m\cup_{p\in C_n}S_p^m.$$
\end{proof}

Obviously, on any circle there exists a countable family of arcs
such that any arc contains an arc from that family. Together with
the last lemma this observation gives the following corollary, which
will be used in the next section.

\begin{corollary}\label{c1} Let $F$ be as in the statement of Theorem \ref{t2}.
The set of all $w$  corresponding to functions $F$ for which there
exists an arc $I$ of the circle of convergence, such that $F$ admits
a pseudocontinuation through $I$, is an event.
\end{corollary}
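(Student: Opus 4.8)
The plan is to show that the set $A$ of all $w$ for which $F$ admits a pseudocontinuation through \emph{some} arc of its circle of convergence is a \emph{countable} union of events of the type already handled in Lemma \ref{l2}. A priori $A$ is an uncountable union over all arcs $I$, so the whole point is to replace this uncountable union by a countable one, exploiting the countable family of arcs described just before the statement.

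First I would fix a countable family $\{I_k\}_{k\geq 1}$ of arcs of the circle of convergence — for instance all open arcs whose endpoints have rational argument — arranged so that every arc $I$ contains some $I_k$ with $\overline{I_k}$ in the relative interior of $I$ (the rational-endpoint family clearly has this slightly strengthened property, since one may always shrink). For each $k$ let $A_k$ be the set of $w$ for which the corresponding $F$ admits a pseudocontinuation through $I_k$; by Lemma \ref{l2} each $A_k$ is an event.

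The key step is a monotonicity property: if $F$ pseudocontinues through $I$, then it pseudocontinues through every subarc $J\subset I$. Indeed, suppose $F=g$ a.e.\ on $I$ for some $g\in\NN(D)$ with $D$ adjacent to $I$. Given $J$ with $\overline{J}$ in the interior of $I$, one can choose a subdomain $D'\subset D$ adjacent to $J$, with $\{|z-a|<r\}\cup J\cup D'$ open, that still contains a full non-tangential approach region from outside the circle over every point of $J$. Restricting $g$ to $D'$ keeps it in the Nevanlinna class $\NN(D')$, since a ratio of bounded analytic functions on $D$ restricts to such a ratio on $D'$; and because $D'$ retains the non-tangential cones over $J$, the non-tangential boundary values of $g|_{D'}$ on $J$ agree a.e.\ with those of $g$, hence with those of $F$. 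Thus $F$ pseudocontinues through $J$.

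Granting this, $w\in A$ exactly when $F$ pseudocontinues through some arc $I$; by the choice of the family, $I$ contains some $I_k$, and the monotonicity step then yields a pseudocontinuation through $I_k$, so $w\in A_k$. Conversely each $A_k\subset A$ since $I_k$ is itself an arc. Therefore $A=\bigcup_{k\geq 1}A_k$ is a countable union of events, and hence an event. The main obstacle is the monotonicity step: although geometrically transparent, it requires some care in arranging the subdomain $D'$ so that it simultaneously preserves Nevanlinna membership and the non-tangential approach regions over $J$, which is precisely what ensures the boundary values are left unchanged.
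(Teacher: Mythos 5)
Your proposal is correct and follows essentially the same route as the paper: the paper also reduces the uncountable union over arcs to a countable one via a countable family of arcs such that every arc contains a member of the family, and then applies Lemma \ref{l2} to each member. The only difference is that you spell out the monotonicity step (pseudocontinuation through $I$ implies pseudocontinuation through a subarc, by restricting the Nevanlinna-class continuation to a subdomain adjacent to the subarc), which the paper treats as obvious; your justification of it is sound.
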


\begin{proof} [Proof of Theorem \ref{t2}.]
We will suppose without loss of generality that $r=1$, and we will say for simplicity that
an analytic function $f$ in the unit disk is pseudocontinuable if it
is pseudocontinuable through the unit circle into $\hat{\Bbb
C}\setminus\bar{\Bbb D}$. It follows from Lemma \ref{l2} that the set $A$ of all
pseudocontinuable functions is an event.

First, suppose that
$\sum_{n=0}^\infty \epsilon_n<\infty$. For every $n\geq 0$ denote by $a_n$  a
complex number
such that $P(X_n=a_n)= 1-\epsilon_n$ and set $F_0(z)=\sum_{n=0}^\infty a_n z^n$ (such
a number exists by the definition of $\epsilon_n$). Then the
probabilities
$$p_N=P(\hat{F}(n)=a_n\text{ for all $n\geq N$ })=\prod_{n=N}^\infty(1-\epsilon_n)$$
 tend
to 1 as $N\to\infty$. Therefore, $F-F_0$ is a polynomial almost surely. In
particular, $F$ is pseudocontinuable
almost surely if and only if $F_0$ is pseudocontinuable.

Consider now the essential case when $\sum_{n=0}^\infty\epsilon_n=\infty$.
By Lemma \ref{l1} one can choose a subset $\Lambda$ of $\Bbb Z_+$ such that
$\Lambda$ is an $R$-set and
$\sum_{n\in \Lambda}\epsilon_n=\infty$. Construct a sequence $\{T_n\}_{n\geq 0}$ of
automorphisms (measure-preserving bijections) of the probability
space $\Omega$ such that $T_n$ is the identity map for $n\not\in \Lambda$ and
$P(X_n(w)\neq X_n(T_n(w)))\geq 2\epsilon_n$
for $n\in \Lambda$.

To give an idea of how to construct such $T_n$ we will assume that
$\Omega$ is the circle
$\Bbb T$ with Lebesgue measure. Recall that by our definition
$\epsilon_n=1-\max_{\xi\in\Bbb C} P(X_n=\xi)$.
Then the automorphism $T_n$ of $\Omega=\Bbb T$, satisfying $P(X_n(w)\neq
X_n(T_n(w)))\geq 2\epsilon_n$, can
be chosen as an appropriate rotation of $\Bbb T$.

To complete the proof let us notice that,
since any polynomial is pseudocontinuable,
the existence of pseudocontinuation does not depend on any finite number of
coefficients. So by the zero-one law
the probability of the event "$F(z)$ is  pseudocontinuable" is equal to 0 or 1. We
will assume that this probability
is 1 and obtain a contradiction. If $F$ is pseudocontinuable almost surely then so is
$$\tilde F=\sum_{n=0}^\infty X_n(T_n(w))z^n$$
because the random variables $X_n(w)$ and $X_n(T_n(w))$ are similar. Hence the
difference
$$F(z)-\tilde F(z)=\sum_{n\in\Lambda}  \left(X_n(w)-X_n(T_n(w))\right)z^n $$
is pseudocontinuable almost surely. Since $\Lambda$ is an $R$-set, $F(z)-\tilde
F(z)$ must have an analytic
continuation through $\Bbb T$ into a neighborhood of $\bar{\Bbb D}$ almost surely. A
function analytic in $\Bbb D$ that has a meromorphic pseudocontinuation
to  $\hat{\Bbb C}\setminus\bar{\Bbb D}$ and, at the same time, has an analytic
continuation through $\Bbb T$ must be a rational function with poles
outside of $\Bbb D$.

So the function $F(z)-\tilde F(z)$ is rational a.s., and its spectrum (the set of all $n$ for which
$n$-th Taylor coefficient is non-zero) is included in $\Lambda$.
Now it follows easily (see also \cite{Al2}) that $F(z)-\tilde F(z)$ is a polynomial almost surely.
By the Borel-Cantelli
Lemma this contradicts  the condition that
$\sum_{n\in\Lambda}\epsilon_n=\infty$.

\end{proof}

\begin{proof}[Proof of Theorem \ref{t1}]

Suppose first that $\sum \epsilon_n<\infty$. The second statement of the theorem can
be proved in the same way
as the second statement of Theorem \ref{t2}.

Now let $\sum \epsilon_n=\infty$. Since $F\in H^2$ a.s., $r_F\geq 1$. If $r_F=1$ then the statement
follows directly from Theorem
\ref{t2}.

Suppose that $1<r_F<\infty$ and $F$ is non-cyclic with probability
greater than zero. Then by the 0-1 law it is non-cyclic with
probability one, i.e. it has a pseudocontinuation $\tilde F$ through
$\Bbb T$ into $\hat{\Bbb C}\setminus\bar{\Bbb D}$ a.s. From the
uniqueness of pseudocontinuation, $\tilde F$ coincides with the
analytic continuation of $F$ through $\Bbb T$ in $\{1<|z|<r_F\}$.
The restriction of $\tilde F$ to $\{|z|>r_F\}$ is a
pseudocontinuation of $F$ through the circle of convergence into
$\hat{\Bbb C}\setminus\overline{r\Bbb D}$ which exists a.s. This
contradicts Theorem \ref{t2}.

Finally, assume that $r_F=\infty$. If $F$ has a pseudocontinuation
through $\Bbb T$ into $\hat{\Bbb C}\setminus\bar{\Bbb D}$ a.s. and,
at the same time, is an entire function, then it is a polynomial
a.s. By the Borel-Cantelli Lemma this can only occur if $\sum
\epsilon_n<\infty$ and we obtain a contradiction.
\end{proof}

\section{Local pseudocontinuations}\label{sec3}

As was mentioned in the introduction, classical theorems on local
analytic continuation of random power series from \cite{St},
\cite{PZ} and \cite{RN} can be strengthened to cover local
pseudocontinuations. This is the goal of this section.

In Section \ref{sec2} we  proved that the properties of random series considered 
below are events (see Corollary \ref{c1}) and so these matters do not need to be
addressed in the proofs here. In the proofs we combine the  ideas from the
proof
of the Ryll-Nardzewski theorem given in \cite{K} with Lemma \ref{l1}.

Recall that $X$ is said to be a symmetric random variable if  $X$ and $-X$ are equidistributed.

 In Theorem \ref{t3} we
prove the a.s. absence of local pseudocontinuation for the
symmetric random variables. This includes the classical case of
Gaussian, Steinhaus and Rademacher variables.

\begin{theorem}\label{t3}Let  $F(z)=\sum_{n=0}^\infty X_n(w) z^n$ be a power series
where $\{X_n(w)\}$ is a sequence of symmetric independent random variables. Let
$r_F$ be the radius
of convergence of $F(z)$ and suppose that $0<r_F<\infty$. Then   $F(z)$ does not
admit a pseudocontinuation across any arc of the circle $\{|z|=r_F\}$ almost
surely.
\end{theorem}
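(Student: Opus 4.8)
The plan is to follow the template of the proof of Theorem \ref{t2}, replacing the measure-preserving automorphisms used there by the sign flip $X_n \mapsto -X_n$ (available precisely because the variables are symmetric) and carrying out everything locally, across one fixed arc. Normalizing $r_F = 1$ by the dilation $z \mapsto r_F z$ (which keeps the coefficients symmetric), I would first reduce to a single arc. By Corollary \ref{c1} the event that $F$ pseudocontinues across some arc is measurable; since altering finitely many $X_n$ only adds a polynomial, and polynomials pseudocontinue everywhere, this event does not depend on finitely many coefficients, so by the zero-one law it has probability $0$ or $1$. Choosing a countable family of arcs such that every arc contains one of them, the event becomes a countable union of the zero-one events ``$F$ pseudocontinues across $I_j$''; if the union has probability $1$, then some fixed arc $I$ satisfies $P(F \text{ pseudocontinues across } I) = 1$. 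I assume this and seek a contradiction.

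Next I would isolate a subseries supported on a thin spectral set. Using symmetry, I realize, for each $n$ in a set $\Lambda \subset \Z_+$ still to be chosen, a measure-preserving involution $T_n$ of $\Omega$ with $X_n \circ T_n = -X_n$, and set $T_n = \id$ off $\Lambda$; this is the symmetric analogue of the automorphisms in Theorem \ref{t2}. Then $\tilde F = \sum_n (X_n\circ T_n)\, z^n$ is equidistributed with $F$, so $\tilde F$ pseudocontinues across $I$ almost surely, and on the intersection of the two probability-one sets the difference
\[
D(z) = F(z) - \tilde F(z) = \sum_{n\in\Lambda} 2X_n z^n
\]
pseudocontinues across $I$, because the Nevanlinna class is closed under subtraction and boundary values subtract a.e.\ on $I$. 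If $\Lambda$ is an $R$-set, then by a local form of Aleksandrov's theorem (Theorem \ref{t6}) the pseudocontinuation of $D$ across $I$ upgrades to a genuine analytic continuation of $D$ across $I$, almost surely.

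Now $D$ is itself a symmetric random power series, and the contradiction should come from the classical Ryll--Nardzewski theorem: if the circle of convergence of $D$ is exactly $\{|z|=1\}$, then $\{|z|=1\}$ is almost surely a natural boundary for $D$, contradicting analytic continuation across $I$. As a preliminary I would record that $\sum_n \epsilon_n = \infty$: were it finite, Theorem \ref{t2}(b) would give $F - F_0$ a polynomial with a deterministic $F_0 = \sum a_n z^n$, but a nonzero dominant atom $a_n$ would by symmetry force a matching atom at $-a_n$ and hence $\epsilon_n \ge 1/2$, impossible once $\epsilon_n \to 0$; thus $a_n = 0$ for large $n$ and $F$ would be an almost sure polynomial, contradicting $0 < r_F < \infty$. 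This rules out the degenerate case and parallels Theorem \ref{t2}.

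The main obstacle is the construction of $\Lambda$, and this is where I expect the real work to lie. The set $\Lambda$ must simultaneously be an $R$-set, so that Theorem \ref{t6} applies, \emph{and} carry the full exponential growth of the coefficients, so that $D$ has radius of convergence exactly $1 = r_F$ and the Ryll--Nardzewski dichotomy bites; if instead $\limsup_{n\in\Lambda}|X_n|^{1/n} < 1$ almost surely, then $D$ would be analytic across $\{|z|=1\}$ automatically and no contradiction would arise. Since $r_F = 1$ means $\sum_n P\bigl(|X_n| > (1-\delta)^n\bigr) = \infty$ for every $\delta \in (0,1)$ (Borel--Cantelli together with independence), this second requirement amounts to preserving each of these countably many divergent series after passing to $\Lambda$. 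I would therefore strengthen the diagonal construction of Lemma \ref{l1} so that, while it deletes for each large $s$ a residue class $-s + m\Z$ (which secures the $R$-set property through Theorem \ref{t6}), it keeps all of these sums divergent at once, interleaving the requirements for $\delta = 1/k$, $k = 1,2,\dots$, in the diagonal process. Checking that a single $\Lambda$ can meet all these constraints simultaneously, together with pinning down the local version of Aleksandrov's theorem invoked above, are the two points that require genuine care.
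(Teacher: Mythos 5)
There is a genuine gap, and it is precisely at the step you yourself flag: your argument hinges on a ``local form of Aleksandrov's theorem,'' namely that a pseudocontinuation of $D=2\sum_{n\in\Lambda}X_nz^n$ across the single arc $I$ upgrades to an analytic continuation across $I$. Theorem \ref{t6} does not say this: the $R$-set property as defined in the paper is \emph{global} --- it converts a pseudocontinuation across all of $\mathbb{T}$ into an analytic continuation --- and no local version is stated or available here. This is not a technicality to be pinned down later; it is the crux, and the paper's proof is organized exactly to avoid needing it. The missing idea is a second, different use of symmetry, deployed \emph{before} any $R$-set enters: one upgrades pseudocontinuation across the one arc $I$ to pseudocontinuation across the whole circle. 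Fix an integer $m>2\pi r/|I|$ and, for each $k=0,\dots,m-1$, flip the signs of the coefficients along the residue class $k$ mod $m$: with $\epsilon_n^k=-1$ iff $n\equiv k \pmod m$, the series $F_k=\sum_n\epsilon_n^kX_nz^n$ is equidistributed with $F$ (symmetry again), so $F-F_k=2\sum_j X_{k+jm}z^{k+jm}=z^kH_k(z)$ pseudocontinues across $I$ a.s.; but $H_k(z)=H_k(e^{2\pi i/m}z)$, and since the rotations of $I$ by powers of $e^{2\pi i/m}$ cover the circle, each $H_k$ --- hence $F=\frac12\sum_{k=0}^{m-1}z^kH_k(z)$ --- pseudocontinues across all of $\{|z|=r\}$ almost surely. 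Only after this local-to-global step does your sign flip on an $R$-set $\Lambda$ become effective, now legitimately through the global Theorem \ref{t6}.

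Your other flagged difficulty --- building one $\Lambda$ that is an $R$-set and simultaneously carries divergence for all $\delta=1/k$ --- does not require strengthening Lemma \ref{l1}. Since $r=(\limsup|X_n|^{1/n})^{-1}$ a.s., independence and the second Borel--Cantelli lemma give $\sum_nP(|X_n|^{-1/n}<r+\delta)=\infty$ for each fixed $\delta>0$, so one can choose a \emph{single} sequence $\delta_n\downarrow0$ (constant on long blocks) with $\sum_nP(|X_n|^{-1/n}<r+\delta_n)=\infty$, and then apply Lemma \ref{l1} once to these numbers. The endgame is then an elementary Borel--Cantelli count rather than the Ryll--Nardzewski theorem you invoke: analytic continuation of $F-F'$ across the full circle forces its radius of convergence to be at least $r+2\epsilon$ for some deterministic $\epsilon>0$, whence $\sum_{n\in\Lambda}P(|X_n|^{-1/n}<r+\epsilon)<\infty$, contradicting the divergence since $\delta_n<\epsilon$ eventually. (Your Ryll--Nardzewski route would be logically admissible --- it uses the weaker classical result to prove the stronger one --- but it is only reachable after the local-to-global step you are missing; likewise your preliminary reduction via $\sum\epsilon_n$ and Theorem \ref{t2} is correct but unnecessary, as the paper's proof never uses the $\epsilon_n$ at all.)
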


\begin{proof}

Let  $I$ be  an open arc of the circle $|z|=r=r_F$. We will  prove
that $F(z)$ does not admit a pseudocontinuation across $I$ almost surely. Assume the
opposite. Then by the zero-one law $F(z)$  admits a pseudocontinuation
across $I$ almost surely. To obtain a contradiction we will show that this implies
that $F(z)$ admits a pseudocontinuation across the whole circle
$\{|z|=r\}$ almost surely.

To do this first choose an integer $m>\frac{2\pi r}{|I|}$. For any $k=0,1,2,...,m-1$
define
$\epsilon_n^k=-1$ if $n\equiv k$ (mod m) and $\epsilon_n^k= 1$ if $n\not\equiv k$
(mod m). Put $F_k(z)=\sum_{n=0}^\infty \epsilon_n^k X_n z^n$.
Then the functions $F_k$ are pseudocontinuable across $I$ because the random
variables $X_n$ are symmetric. Hence the same is true for
$$F(z)-F_k(z)=\sum_{j=0}^\infty 2X_{k+jm} z^{k+jm}=z^k H_k(z),$$
where the function $H_k$ has the property $H_k(z)=H_k(e^{\frac{2\pi i}m}z)$ for any
$k$. It follows that $H_k$ is pseudocontinuable not only through $I$ but
through the whole circle $\{|z|=r\}$ almost surely and therefore
$$F(z)=\frac 12\sum_{k=0}^{m-1} z^k H_k(z)$$
admits  a pseudocontinuation across $\{|z|=r\}$ almost surely.

Now recall that $r=(\limsup |X_n|^{1/n})^{-1}$ almost surely. Hence there exists a
sequence $\delta_n\downarrow 0$ such that
$$\sum_{n\geq 0} P(|X_n|^{-1/n}<r+\delta_n)=\infty.$$
By Lemma \ref{l1} there exists an $R$-set $\Lambda\subset\Bbb Z_+$ such that
$$\sum_{n\in\Lambda} P(|X_n|^{-1/n}<r+\delta_n)=\infty.$$
Put $X_n'=X_n$ if $n\not\in \Lambda$ and $X_n'=-X_n$ if $n\in\Lambda$. Then the
random variables $X_n$ and $X_n'$ are similar for all $n$.
Since $F(z)$ admits a pseudocontinuation across $\{|z|=r\}$ almost surely, so does
$F'=\sum X'_n z^n$. Therefore the random series
$$F-F'=2\sum_{n\in \Lambda}X_n(z)z^n$$
admits a pseudocontinuation across $\{|z|=r\}$ almost surely. Since $\Lambda$ is an
$R$-set, by Theorem \ref{t6}, $F-F'$ admits
an analytic continuation to a larger disk $\{|z|<r+2\epsilon\}$ almost surely and thus
$$\sum_{n\in\Lambda} P(|X_n|^{-1/n}<r+\epsilon)<\infty$$
(Borel-Cantelli Lemma), which contradicts our choice of $\Lambda$.

\end{proof}

Now we consider the general case. The statement of Theorem \ref{t3}, without the
requirement of symmetry, no longer holds true as shown
by the simple example where $X_n$ is equal to $1-\frac 1{2^n}$ or to $1+\frac
1{2^n}$ with probabilities
$\frac12$. Then $r_F=1$ but $F$ a.s. has an analytic continuation  across any arc
not containing 1.

\begin{theorem}\label{t4}
Consider a power series $F(z)=\sum_{n=0}^\infty X_n(w) z^n$, where $\{X_n(w)\}$ is a
sequence of independent random variables. Let $r_F$ be the radius
of convergence of $F(z)$ and suppose that $0<r_F<\infty$. Then there are two
possibilities:

1)  $F(z)$ does not admit a pseudocontinuation across any arc of the circle
$\{|z|=r_F\}$ almost surely, or

2) there exists a (deterministic) function $F_0(z)=\sum_{n=0}^\infty a_n z^n$ such that
$r_{F-F_0}>r_F$ and,
if $r_{F-F_0}<\infty$, $F-F_0$ does not
admit a pseudocontinuation across any arc of $\{|z|=r_{F-F_0}\}$ almost surely.

\end{theorem}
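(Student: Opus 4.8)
The plan is to mirror the proof of Theorem \ref{t3}, but to manufacture the symmetry it relied on by passing to an independent copy, and to absorb the non-symmetric part into a deterministic series. Call a series $F$ \emph{recenterable} if there is a deterministic $F_0=\sum a_n z^n$ with $r_{F-F_0}>r_F$; since $r_{F-F_0}=(\limsup|X_n-a_n|^{1/n})^{-1}$ is almost surely deterministic by the zero--one law, recenterability depends only on the law of $\{X_n\}$. The whole theorem then rests on the following claim, which is the symmetry-free analogue of Theorem \ref{t3}: if $G=\sum Z_n z^n$ has independent coefficients, $0<r_G<\infty$, and $G$ is \emph{not} recenterable, then $G$ admits no pseudocontinuation across any arc of $\{|z|=r_G\}$ almost surely.

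To prove the claim, fix an arc $I$, write $R=r_G$, and assume for contradiction that $G$ is pseudocontinuable across $I$ almost surely (legitimate by Lemma \ref{l2} and the zero--one law). The key device, replacing the sign flip of Theorem \ref{t3}, is symmetrization. Passing to the product space, let $\hat G=\sum\hat Z_n z^n$ be an independent copy of $G$, also pseudocontinuable across $I$ almost surely. Fix $m>2\pi R/|I|$ and, for each residue class $k\pmod m$, form $G^{(k)}$ by replacing $Z_n$ with $\hat Z_n$ exactly when $n\equiv k$. Then $G^{(k)}$ has the same law as $G$, hence is pseudocontinuable across $I$ almost surely, so the difference $G-G^{(k)}=\sum_{n\equiv k}(Z_n-\hat Z_n)z^n=z^k h_k(z^m)$ is too. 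Being invariant up to the unimodular factor $\omega^k$ (with $\omega=e^{2\pi i/m}$) under $z\mapsto\omega z$, and since the rotated arcs $\omega^j I$ cover the circle, by uniqueness of pseudocontinuation it patches to a pseudocontinuation across all of $\{|z|=R\}$. Summing the identity $\sum_{k=0}^{m-1}(G-G^{(k)})=G-\hat G$ shows that the \emph{symmetric} series $G-\hat G=\sum(Z_n-\hat Z_n)z^n$ is pseudocontinuable across the whole circle almost surely; this is exactly the point where the missing symmetry is recovered for free.

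Writing $W_n:=Z_n-\hat Z_n$, I finish as in Theorem \ref{t3}. A L\'evy-type inequality combined with non-recenterability of $G$ gives $r_{\sum W_n z^n}=R$ (were it larger, the medians of $Z_n$ would recenter $G$ past $R$). Exactly as in Theorem \ref{t3} one then picks $\delta_n\downarrow 0$ with $\sum_n P(|W_n|^{1/n}>1/(R+\delta_n))=\infty$, and by Lemma \ref{l1} an $R$-set $\Lambda$ for which this sum over $n\in\Lambda$ still diverges. Flipping the sign of $W_n$ on $\Lambda$ preserves the law (the $W_n$ are symmetric), so $2\sum_{n\in\Lambda}W_n z^n$ remains pseudocontinuable across the whole circle almost surely; since $\Lambda$ is an $R$-set, Theorem \ref{t6} upgrades this to an analytic continuation into a strictly larger disk almost surely, whence by the converse Borel--Cantelli lemma $\sum_{n\in\Lambda}P(|W_n|^{1/n}>1/(R+\eta))<\infty$ for some $\eta>0$. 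For large $n$ one has $\delta_n<\eta$, so the events are nested and the last sum dominates the divergent one term by term, a contradiction. This proves the claim.

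It remains to organize the dichotomy. If $F$ is not recenterable, the claim with $G=F$ (and $F_0=0$) gives alternative 1). If $F$ is recenterable, I take $F_0$ realizing the optimal radius $R:=\sup_{F_0}r_{F-F_0}\in(r_F,\infty]$; a diagonal construction over a minimizing sequence of deterministic coefficient sequences shows this supremum is attained by a single $F_0$. Then $G:=F-F_0$ has $r_G=R>r_F$ and is itself non-recenterable, so when $R<\infty$ the claim shows $G$ has no pseudocontinuation across any arc of $\{|z|=R\}$ almost surely, while when $R=\infty$ the corresponding clause is vacuous; either way alternative 2) holds, and the two cases exhaust all possibilities. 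The main conceptual obstacle is the arc-to-circle passage without symmetry, which the independent-copy symmetrization resolves; the most delicate technical points are the attainment of the optimal recentering $F_0$ (the diagonalization must control $\limsup|X_n-a_n|^{1/n}$ uniformly along the tail) and the precise L\'evy-type comparison pinning down $r_{\sum W_n z^n}=R$.
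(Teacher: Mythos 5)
Your proposal is correct, but it takes a genuinely different route from the paper's, which is far shorter because it uses Theorem \ref{t3} as a black box. The paper assumes alternative 1) fails, fixes (by Corollary \ref{c1} and the zero-one law) an arc $I$ with a.s.\ pseudocontinuation, passes to the product space $\Omega\times\Omega$ and the symmetrized series $G(z,w,w')=\sum_{n=0}^\infty(X_n(w)-X_n(w'))z^n$, applies Theorem \ref{t3} to $G$ directly to get $r_G>r_F$ and, if $r_G<\infty$, a.s.\ non-pseudocontinuability across $\{|z|=r_G\}$, and then uses a Fubini argument to select a single $w_0$ so that $F_0(z)=\sum X_n(w_0)z^n$ works --- i.e., the paper's $F_0$ is a \emph{typical sample path} of $F$ itself, and no optimality or attainment question ever arises. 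You instead prove an intrinsic intermediate claim (non-recenterable $\Rightarrow$ a.s.\ no pseudocontinuation across any arc of the circle of convergence) and build $F_0$ as an optimizer of the recentering radius. This costs you three pieces the paper never needs: (i) re-running the residue-class patching of Theorem \ref{t3} with an independent copy in place of sign flips --- your identity $\sum_{k=0}^{m-1}(G-G^{(k)})=G-\hat G$ is correct and neatly recovers the missing symmetry; (ii) a L\'evy/median symmetrization inequality to pin down $r_{\sum W_nz^n}=R$, which you genuinely need because your contradiction must occur at $\{|z|=R\}$ itself, whereas the paper gets the right circle for free by invoking Theorem \ref{t3} at $G$'s own radius (for complex $X_n$ one should take medians of real and imaginary parts separately; the constants wash out under $n$-th roots); and (iii) the attainment of $\sup_{F_0}r_{F-F_0}$, which does go through by the diagonal you sketch (choose $N_j$ increasing with $|a_n^{(j)}-a_n^{(j+1)}|^{1/n}\le(1+1/j)/R_j$ for $n\ge N_j$ and splice the sequences, noting $\limsup_n|a_n^{(j)}-a_n^{(j')}|^{1/n}\le 1/R_{\min(j,j')}$), though both (ii) and (iii) are only sketched in your write-up. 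What your route buys is a sharper structural statement: your claim is precisely the pseudocontinuation analogue of the Ryll-Nardzewski criterion (whose ideas the paper acknowledges borrowing from \cite{K}) and identifies the canonical radius $R=\sup_{F_0}r_{F-F_0}$ intrinsically, at the price of essentially reproving Theorem \ref{t3} rather than citing it.
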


\begin{proof}

Suppose part 1) of the statement of the Theorem does not
hold. Then by the zero-one law
there exists an open arc $I$  of $\{|z|=r\}$ such that $F$ admits a
pseudocontinuation across $I $ almost surely. Using
$\Omega \times \Omega$ as a  new probability space consider
$$G(z,w,w')=\sum_{n=0}^\infty (X_n(w)-X_n(w'))z^n$$
where $(w,w')\in \Omega \times \Omega$. Now the coefficients of the random series
$G$ are symmetric independent random variables
and $G$ is pseudocontinuable across $I$ almost surely. So, by Theorem \ref{t3},
$r_G>r_F$ and, if
$r_G<\infty$, then $G$ does not admit a pseudocontinuation through any
arc of $\{|z|=r_G\}$. Clearly one can choose a point
$w_0\in \Omega$ such that for almost all $w$, $r_G(w, w_0)=r_G$, and $G(r,w,w_0)$
does not admit a pseudocontinuation across $\{|z|=r_G\}$. Set
$a_n=X_n(w_0),n\geq 0,$ and denote  $F_0(z)=\sum_{n\geq 0} a_n z^n$. We have
$$ F-F_0=\sum_{n=0}^\infty(X_n(w)-a_n)z^n=G(z,w,w_0),$$
and the result follows from our choice of $w_0$.

\end{proof}

\bs


\begin{thebibliography}{11}


\bibitem{Ab}
{\sc Abakumov, E.}  {\it
Cyclicity and approximation by lacunary power series,}
  Michigan Math. J.  42  (1995),  no. 2, 277--299

 \bibitem{Al1}
{\sc Aleksandrov, A. B.}  {\it
Lacunary series and pseudocontinuations,}
(Russian)  Zap. Nauchn. Sem. S.-Peterburg. Otdel. Mat. Inst. Steklov. (POMI)  232
(1996),
Issled. po Linein. Oper. i Teor.
Funktsii. 24, 16--32, 213;  translation in
J. Math. Sci. (New York)  92  (1998),  no. 1, 3550--3559

 \bibitem{Al2}
{\sc Aleksandrov, A. B.}  {\it
Gap series and pseudocontinuations. An arithmetic approach,}
(Russian)  Algebra i Analiz  9  (1997),  no. 1, 3--31;  translation in
St. Petersburg Math. J.  9  (1998),  no. 1, 1--20


 \bibitem{Bo}
{\sc Borel, E.}  {\it
Sur les s\'eries de Taylor,}
C. r. hebd. S\'eanc. Acad. Sci., Paris,
 123 (1896), 1051--2

\bibitem{CR}
{\sc Cima, J. and Ross, W.}  {\it The Backward Shift on the Hardy Space,} American Mathematical
Society, 2000, 199 pp.



 \bibitem{DSS}
{\sc Douglas, R., Shapiro, H. and Shields, A.}  {\it
Cyclic vectors and invariant subspaces for the backward shift operator,}
Ann. Inst. Fourier 20,
 1 (1970), 37--76.

 \bibitem{K}
{\sc Kahane, J.-P.}  {\it
Some random series of functions. Second edition,}
Cambridge Studies in Advanced Mathematics, 5.
Cambridge University Press, Cambridge, 1985. xiv+305 pp


\bibitem{N}
{\sc Nikol'skii, N. K.}  {\it
Treatise on shift operator. Spectral function theory,} Grundlehren Math. Wiss.,  273
(1986),
Springer-Verlag, Berlin-New York.


 \bibitem{PZ}
{\sc Payley, R. and Zygmund, A.}  {\it
A note on analytic functions in the unit circle} Proc. Camb. Phil. Soc.,  28 (1932),
266--72.


\bibitem{RS}
{\sc Ross, W. and Shapiro, H.}  {\it
Generalized analytic continuation,} University Lecture Series, 25.
American Mathematical Society, Providence, RI, 2002. xiv+149 pp


\bibitem{R}
{\sc  Rozanov, Yu. A.}  {\it
{ Statsionarnye slucha\u\i nye protsessy}. (Russian) [Stationary random processes]
Second edition,}
{ Teoriya Veroyatnoste\u\i  i Matematicheskaya Statistika} [Probability Theory and
Mathematical Statistics],
42. ``Nauka'', Moscow, 1990. 272 pp

\bibitem{RN}
{\sc Ryll-Nardzewski, C. D.}  {\it
Blackwell's conjecture on power series with random coefficients,} Studia Math., 13
(1953), 30--36


\bibitem{Sh}
{\sc Shapiro, H.}  {\it
Generalized analytic continuation,}
Simposia on Teor. Phis. and Math., 8 (1968), 151--163


\bibitem{St}
{\sc Steinhaus, H.}  {\it
\"Uber die Whrscheinlichkeit daf\"ur, dass der Konvergenxkreis einer Potenxreihe
ihre nat\"urlich Grenze ist,}
Math. Z.,  31 (1930), 408--16.



 \end{thebibliography}
\end{document}